\newtheorem{theorem}{Theorem}[section]
\newtheorem{lemma}[theorem]{Lemma}
\newtheorem{thm}[theorem]{Theorem}
\theoremstyle{definition}
\newtheorem{definition}[theorem]{Definition}
\newtheorem{definitions}[theorem]{Definitions}
\newtheorem{problem}[theorem]{Problem}
\theoremstyle{remark}
\newtheorem{remark}[theorem]{Remark}
\DeclareMathOperator{\alm}{alm}
\title[Combinatorial principles, compactness of spaces V]
{Combinatorial and model-theoretical principles related to 
regularity of ultrafilters and compactness of topological spaces. V.}
\author[]{Paolo Lipparini} 
\address{Dipartimento di Matematica\\
Viale della Ricerca Scientifica\\
II Universitoma di Roma (Tor Vergata)\\
I-00133 ROME 
ITALY
}
\urladdr{http://www.mat.uniroma2.it/\textasciitilde lipparin}
\thanks{The author has received support from MPI and GNSAGA.
We wish to express our gratitude to X. Caicedo for stimulating discussions and correspondence} 
\keywords{Regular, almost regular elementary extensions of models with power sets as a base; infinite matrices of sets; regular ultrafilters; images of ultrafilters; compact abstract logics} 
\subjclass[2000]{Primary 03C20, 03E05, 03C95;
Secondary 03C55, 03C98}
\begin{document} 

\begin{abstract} 
We generalize to the relations
$(\lambda, \mu) \stackrel{\kappa}{\Rightarrow} (\lambda', \mu')$ and
$\alm (\lambda, \mu) \stackrel{\kappa}{\Rightarrow} \alm (\lambda', \mu')$
 some results obtained in 
Parts II and IV. We also present a multi-cardinal version.
\end{abstract}

\maketitle 

In this note we present a version of 
\cite[II, Theorem 1]{parti} and
\cite[IV, Theorem 7]{parti}
for the relation $(\lambda, \mu) \stackrel{\kappa}{\Rightarrow} (\lambda', \mu')$ and some variants.

See Parts I, II, IV \cite{parti} or 
\cite{BF,CN,CK,KM,easter,bumi,arxiv}
for unexplained notation.

\section{Equivalents of $(\lambda, \mu) \stackrel{\kappa}{\Rightarrow} (\lambda', \mu')$}\label{1} 

Let us recall the definition of $(\lambda, \mu) \stackrel{\kappa}{\Rightarrow} (\lambda', \mu')$, as given in \cite{abst}. 
Another formulation (equivalent for $\kappa \geq \sup \{  \lambda , \lambda' \}$) had been given in 
\cite{easter}. Notice that the two formulations 
are \textbf{not necessarily equivalent} for $ \kappa < \sup \{  \lambda , \lambda' \}$. See also \cite[Section 0]{arxiv}, and Definition \ref{deffromarx} and
Theorem \ref{lmklmeq} below.

Given infinite cardinals $ \lambda  \geq \mu$ and a set $y\in S_\mu(\lambda )$,
we denote by $[y] _{S_\mu(\lambda )} $, or simply by
$ [y] $ if there is no danger of confusion, the 
``cone'' $\{ s \in S_\mu(\lambda )| y \subseteq s\}$
 of $ y $ in $S_\mu(\lambda )$. 

\begin{definition}\label{lmklm} \textrm{\cite{abst}} 
We say that an ultrafilter $D$ over $S_\mu(\lambda )$
\emph{covers} $\lambda $ if and only if 
$[\{ \alpha \}] = \{ s \in S_\mu(\lambda )| \alpha\in s\} \in D $,
for every $ \alpha \in \lambda $.

Let  $ \lambda \geq \mu$, $ \lambda' \geq \mu'$
 be infinite cardinals, and $ \kappa $ be any cardinal. 

The relation $(\lambda, \mu) \stackrel{\kappa}{\Rightarrow} (\lambda', \mu')$ 
holds if and only if 
there are $\kappa $ functions 
$(f_\beta )_{\beta \in \kappa }: S_\mu(\lambda ) \to S_{\mu'}(\lambda ')$
such that whenever $D$ is an ultrafilter
over $S_\mu(\lambda )$, and $D$ covers $\lambda $,
then for some $\beta\in\kappa $ it happens that
$f_\beta (D)$ covers $ \lambda '$.
    \end{definition}

In order to state the next theorem, we must recall some definitions from \cite{easter} and \cite[Section 0]{arxiv}.

\begin{definition}\label{deffromarx}
${\mathfrak S}(\lambda, \mu; \lambda',\mu') $ denotes the model
$\langle S_{\mu''}(\lambda ''), \subseteq, U, U', \{\alpha \} \rangle_{\alpha\in \lambda''}$,
where
$\lambda''= \sup  \{ \lambda, \lambda' \} $,
$\mu''= \sup  \{ \mu, \mu' \} $,
$U(x)$ if and only if $x \in S _\lambda (\mu)$, and
$U'(y)$ if and only if $y\in  S _{ \lambda'} (\mu')$. 

If ${\mathfrak S}^+ $ is an expansion of 
${\mathfrak S}(\lambda, \mu; \lambda'\mu') $,
and $ {\mathfrak B} \equiv {\mathfrak S}^+ $,
we say that ${\mathfrak B}$ is $(\lambda, \mu)$-\emph{regular}  
if and only if there is $b \in B$ such that 
$ {\mathfrak B} \models U(b) $, and
$ {\mathfrak B} \models  \{ \alpha \} \subseteq   b $
for every $ \alpha \in \lambda $.

Similarly, ${\mathfrak B}$ is $(\lambda', \mu')$-\emph{regular}  
if and only if there is $b' \in B$ such that 
$ {\mathfrak B} \models U'(b') $, and
$ {\mathfrak B} \models  \{ \alpha'  \} \subseteq b' $
for every $ \alpha' \in \lambda' $.
\end{definition} 

\begin{thm}\label{lmklmeq} 
Suppose that  $ \lambda \geq \mu$, $ \lambda' \geq \mu'$
 are infinite cardinals, and $ \kappa $ is any cardinal. 

Then the following conditions are equivalent.

\smallskip

(a) $(\lambda, \mu) \stackrel{\kappa}{\Rightarrow} (\lambda', \mu')$ holds.

\smallskip

(b) There are $ \kappa $ functions 
$ (f_ \beta ) _{ \beta \in \kappa }: S_\mu(\lambda ) \to S_{\mu'}(\lambda ')$
such that 
for every function $g: \kappa \to \lambda' $ there exist finite
sets $F \subseteq \kappa $ and   $G \subseteq \lambda $ such that 
$ [G] \subseteq
 \bigcup_{\beta\in F}  f_\beta ^{-1} [\{g(\beta)\}]$.

\smallskip

(c) There is a family $ (C_{ \alpha  , \beta }) _{ \alpha\in\lambda'  , \beta \in \kappa}  $ 
of subsets of $ S_ \mu(\lambda)$ such that:

(i) For every $ \beta\in\kappa$ and  every  $H \subseteq \lambda' $, if
 $|H|\geq \mu'$ then
 $\bigcap_{\alpha  \in H} C_{ \alpha  , \beta  } = \emptyset  $.

 (ii) For every function $g: \kappa \to \lambda' $ there exist finite
sets $F \subseteq \kappa $ and   $G \subseteq \lambda $ such that 
$ [G] \subseteq  \bigcup_{\beta\in F} C_{ g(\beta) , \beta  }$.

 \smallskip

(c$'$) There is a family $ (B_{ \alpha  , \beta }) _{ \alpha\in\lambda'  , \beta \in \kappa}  $ 
of subsets of $ S_ \mu(\lambda)$ such that:

(i) For every $ \beta\in\kappa$ and  every  $H \subseteq \lambda' $, if
 $|H|\geq \mu'$ then
 $\bigcup_{\alpha  \in H} B_{ \alpha  , \beta  } = S_ \mu(\lambda) $.

 (ii) For every function $g: \kappa \to \lambda' $ there exist finite
sets $F \subseteq \kappa $ and   $G \subseteq \lambda $ such that 
$ [G] \cap \bigcap_{\beta\in F} B_{ g(\beta) , \beta  } 
= \emptyset $.

\smallskip

If in addition
$\kappa \geq \sup \{  \lambda , \lambda' \} $,
then the preceding conditions are also equivalent to:

\smallskip

(d) ${\mathfrak S}(\lambda, \mu; \lambda',\mu') $ has 
a multi-sorted expansion
${\mathfrak S}^+ $ 
with at most $\kappa $ new symbols such that whenever 
$ {\mathfrak B} \equiv {\mathfrak S}^+ $ and 
${\mathfrak B}$ is $(\lambda, \mu)$-regular, then 
${\mathfrak B}$ is $(\lambda', \mu')$-regular.

\smallskip

(e) ${\mathfrak S}(\lambda, \mu; \lambda',\mu') $ has 
an expansion
${\mathfrak S}^+ $ 
with at most $\kappa $ new symbols such that whenever 
$ {\mathfrak B} \equiv {\mathfrak S}^+ $ and 
${\mathfrak B}$ is $(\lambda, \mu)$-regular, then 
${\mathfrak B}$ is $(\lambda', \mu')$-regular.

\smallskip

If in addition
$\kappa \geq \sup \{  \lambda , \lambda' \} $, and $\lambda'=\mu'$ is a regular cardinal,
then the preceding conditions are also equivalent to:

\smallskip

(f) Every $\kappa$-$(\lambda,\mu)$-compact logic is  
$\kappa$-$(\lambda',\lambda')$-compact. 

\smallskip

(g) Every $\kappa$-$(\lambda,\mu)$-compact logic generated by $\lambda' $
cardinality quantifiers is $(\lambda',\lambda')$-compact. 
\end{thm}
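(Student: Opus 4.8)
The plan is to prove the theorem by establishing a cycle of implications, treating the "core" equivalence (a)⇔(b)⇔(c)⇔(c$'$) first, and then grafting on (d), (e) under the hypothesis $\kappa\geq\sup\{\lambda,\lambda'\}$, and finally (f), (g) under the additional hypothesis that $\lambda'=\mu'$ is regular. Since conditions (c) and (c$'$) are related by the obvious complementation $B_{\alpha,\beta}=S_\mu(\lambda)\setminus C_{\alpha,\beta}$ — which turns "intersection empty" into "union everything" and vice versa, matching (i) with (i) and (ii) with (ii) — I would dispose of (c)⇔(c$'$) in one line. The bulk of the work is (a)⇔(b)⇔(c).

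For (a)⇒(b): given functions $(f_\beta)_{\beta\in\kappa}$ witnessing $(\lambda,\mu)\stackrel{\kappa}{\Rightarrow}(\lambda',\mu')$, suppose (b) fails for some $g:\kappa\to\lambda'$. Then the family $\{[G]\mid G\in S_\omega(\lambda)\}\cup\{S_\mu(\lambda)\setminus f_\beta^{-1}[\{g(\beta)\}]\mid\beta\in\kappa\}$ has the finite intersection property, so it extends to an ultrafilter $D$; this $D$ covers $\lambda$ (it contains every $[\{\alpha\}]$), yet for each $\beta$ the set $f_\beta^{-1}[\{g(\beta)\}]\notin D$, which says $[\{g(\beta)\}]\notin f_\beta(D)$, so no $f_\beta(D)$ covers $\lambda'$ — contradicting (a). The converse (b)⇒(a) is the same argument run backwards: if $D$ covers $\lambda$ and no $f_\beta(D)$ covers $\lambda'$, pick for each $\beta$ an $\alpha=g(\beta)\in\lambda'$ with $[\{g(\beta)\}]\notin f_\beta(D)$, i.e. $f_\beta^{-1}[\{g(\beta)\}]\notin D$; then for the finite $F,G$ given by (b), $[G]\in D$ (as $D$ covers $\lambda$) is contained in a finite union of sets not in $D$, impossible. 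For (b)⇔(c): from (b), set $C_{\alpha,\beta}:=f_\beta^{-1}[\{\alpha\}]$. Clause (ii) of (c) is immediate. For clause (i): if $|H|\geq\mu'$ and $s\in\bigcap_{\alpha\in H}f_\beta^{-1}[\{\alpha\}]$, then $f_\beta(s)$ would contain all of $H$, so $|f_\beta(s)|\geq\mu'$, contradicting $f_\beta(s)\in S_{\mu'}(\lambda')$. Conversely, given $(C_{\alpha,\beta})$ as in (c), clause (i) lets me define $f_\beta(s):=\{\alpha\in\lambda'\mid s\in C_{\alpha,\beta}\}$ and land in $S_{\mu'}(\lambda')$, recovering (b) since $f_\beta^{-1}[\{\alpha\}]\supseteq C_{\alpha,\beta}$ — actually equality — so (ii) transfers.

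For the equivalence with (d) and (e) when $\kappa\geq\sup\{\lambda,\lambda'\}$, I would follow the template of \cite[IV, Theorem 7]{parti}. The implication (d)⇒(e) is trivial (a multi-sorted expansion is in particular an expansion, reading sorts as unary predicates). For (e)⇒(a): on the model ${\mathfrak S}(\lambda,\mu;\lambda',\mu')$ the relation $(\lambda,\mu)\stackrel{\kappa}{\Rightarrow}(\lambda',\mu')$ corresponds, via the standard ultrafilter/ultrapower dictionary, to the transfer of $(\lambda,\mu)$-regularity to $(\lambda',\mu')$-regularity across ultrapowers — and the point of having $\kappa$ new symbols is exactly to code the $\kappa$ functions $f_\beta$ (or, dually, the sets $C_{\alpha,\beta}$) as new function/relation symbols; conversely from (a) one builds ${\mathfrak S}^+$ by adjoining symbols for the $f_\beta$'s, and every ${\mathfrak B}\equiv{\mathfrak S}^+$ that is $(\lambda,\mu)$-regular witnesses, via the interpreted $f_\beta$'s and the elementary equivalence, that it is $(\lambda',\mu')$-regular — this is where the hypothesis $\kappa\geq\sup\{\lambda,\lambda'\}$ enters, ensuring the expanded language (which must also name the $\lambda''$ constants $\{\alpha\}$) still has size $\leq\kappa$, and ensuring the two formulations of the arrow relation coincide.

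Finally, for (f) and (g) under the extra hypothesis that $\lambda'=\mu'$ is regular, I would invoke the connection between $(\lambda,\mu)$-regular elementary extensions of power-set models and $[\lambda,\mu]$-compactness of abstract logics, in the form developed in Part II and recalled in \cite{CN,arxiv}: a $\kappa$-$(\lambda,\mu)$-compact logic is one for which consistency of $\leq\kappa$-size "$(\lambda,\mu)$-coherent" theory fragments implies satisfiability, and this is equivalent to a transfer property for the associated Keisler-type models. The regularity of $\lambda'=\mu'$ is what makes $(\lambda',\mu')$-regularity coincide with the existence of a point bounded by a cofinal $\lambda'$-sequence, i.e. with genuine $[\lambda',\lambda']$-compactness, and it is what lets cardinality quantifiers do the job in (g) (a $\lambda'$-cardinality quantifier "$\exists^{\geq\lambda'}$" detects exactly failures of $\lambda'$-regularity). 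I expect the main obstacle to be bookkeeping in (e)⇒(a) and in the logic equivalences (f), (g): one must check carefully that the translation between "new symbols on ${\mathfrak S}$" and "$\kappa$ functions $f_\beta$" respects the cardinality bound $\kappa$ and does not secretly use more than $\kappa$ symbols, and that the compactness-theoretic reformulation in (f) genuinely matches the combinatorial arrow rather than a superficially similar variant — the excerpt itself warns that the two formulations of $(\lambda,\mu)\stackrel{\kappa}{\Rightarrow}(\lambda',\mu')$ diverge when $\kappa<\sup\{\lambda,\lambda'\}$, so the placement of the hypothesis $\kappa\geq\sup\{\lambda,\lambda'\}$ must be honored throughout these last implications.
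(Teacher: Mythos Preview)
Your treatment of (a)$\Leftrightarrow$(b)$\Leftrightarrow$(c)$\Leftrightarrow$(c$'$) matches the paper exactly, including the finite-intersection-property argument for (a)$\Leftrightarrow$(b) (which the paper isolates as a separate lemma) and the constructions $C_{\alpha,\beta}=f_\beta^{-1}[\{\alpha\}]$ and $f_\beta(x)=\{\alpha\mid x\in C_{\alpha,\beta}\}$ for (b)$\Leftrightarrow$(c).

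The genuine gap is in (d)$\Rightarrow$(a). You write that ``the point of having $\kappa$ new symbols is exactly to code the $\kappa$ functions $f_\beta$,'' but that is the content of (a)$\Rightarrow$(e); in the hard direction the expansion $\mathfrak{S}^+$ is \emph{arbitrary} --- its new symbols need not be function symbols at all --- and you must manufacture the $f_\beta$'s from it. The paper's device, which your outline omits, is to first add Skolem functions to $\mathfrak{S}^+$ (here is where $\kappa\geq\sup\{\lambda,\lambda'\}$ is actually used, to keep the language of size $\leq\kappa$) and then take the $f_\beta$'s to be \emph{all definable functions} $U\to U'$. To verify these work: given $D$ covering $\lambda$, form $\mathfrak{C}=\prod_D\mathfrak{S}^+$ and let $\mathfrak{B}$ be the Skolem hull of $Id_D$ in $\mathfrak{C}$; then $\mathfrak{B}\equiv\mathfrak{S}^+$ is $(\lambda,\mu)$-regular via $Id_D$, hence by (d) it is $(\lambda',\mu')$-regular via some $x'_D\in B$, and since every element of the hull is definable from $Id_D$ one gets $x'_D=f(Id_D)$ for some definable $f$, whence (after a small correction to force $f:U\to U'$) $f(D)$ covers $\lambda'$. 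Without the Skolem-hull step there is no mechanism to pin the regularity witness down to one of a prespecified $\kappa$-sized family. A smaller gap: for (a)$\Rightarrow$(e) the paper does not argue ``via the interpreted $f_\beta$'s and elementary equivalence'' directly --- given a $(\lambda,\mu)$-regular element $x\in\mathfrak{B}$ one first builds an ultrafilter $D$ on $S_\mu(\lambda)$ from the $1$-type of $x$ over $\mathfrak{S}^+$, checks $D$ covers $\lambda$, applies (a) to obtain $\beta$ with $f_\beta(D)$ covering $\lambda'$, and only then transfers back to conclude $\mathfrak{B}\models\{\alpha'\}\subseteq f_\beta(x)$ for all $\alpha'\in\lambda'$. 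Finally, two side remarks: the paper takes (e)$\Rightarrow$(d) as the trivial direction (a one-sorted expansion is already multi-sorted), opposite to your choice, though both are easy; and the paper explicitly defers the proofs of (f) and (g) to elsewhere, so your sketch there goes beyond what the paper itself establishes.
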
 
 
\begin{remark}\label{announced}
The equivalence of conditions 
(a) and (e) above, for $\kappa \geq \sup \{  \lambda , \lambda' \} $,
 has been announced in 
\cite{abst}. 

For $\kappa=\lambda=\mu$ and  $\lambda'=\mu'$ both regular cardinals, 
the equivalence of Conditions
(e), (f), (g) above has been announced in
\cite[p. 80]{arxiv}. 
\end{remark}

\begin{lemma}\label{lem}
Suppose that  $ \lambda \geq \mu$ and $ \lambda' \geq \mu'$ 
are infinite cardinals, and 
$\kappa $ is any cardinal.
Suppose that $ (f_ \beta ) _{ \beta \in \kappa } $
is a given set of functions from
$S_\mu(\lambda )$ to $S_{\mu'}(\lambda ')$.

Then the following are equivalent.

(a) Whenever $D$ is an ultrafilter
over $S_\mu(\lambda )$ and $D$  covers 
 $\lambda $, then there exists some $ \beta \in \kappa $
such that $f_ \beta (D)$ covers $ \lambda' $.

(b) For every function $g: \kappa \to \lambda' $ there exist finite
sets $F \subseteq \kappa $ and   $G \subseteq \lambda $ such that 
$ [G] \subseteq
 \bigcup_{\beta\in F}  f_\beta ^{-1} [\{g(\beta)\}]$.  
\end{lemma}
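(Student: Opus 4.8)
The plan is to prove Lemma \ref{lem} by establishing the contrapositive in both directions, translating the covering condition on ultrafilters into a statement about which sets can be extended to such an ultrafilter. The key observation is that $D$ covers $\lambda$ exactly when $D$ contains the filter $E$ generated by the cones $\{[\{\alpha\}] : \alpha \in \lambda\}$; since $[\{\alpha_1\}] \cap \dots \cap [\{\alpha_n\}] = [\{\alpha_1,\dots,\alpha_n\}] = [G]$ for $G = \{\alpha_1,\dots,\alpha_n\}$ finite, the sets of the form $[G]$ with $G \in S_\omega(\lambda)$ form a base for $E$. By the standard ultrafilter existence criterion, a subset $X \subseteq S_\mu(\lambda)$ is contained in some ultrafilter $D$ covering $\lambda$ if and only if $X \cap [G] \neq \emptyset$ for every finite $G \subseteq \lambda$; more generally, a family of subsets is simultaneously avoidable by such an ultrafilter precisely when it has the finite intersection property together with all the cones $[G]$.

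For the direction (b) $\Rightarrow$ (a), suppose $D$ covers $\lambda$ but $f_\beta(D)$ fails to cover $\lambda'$ for every $\beta \in \kappa$. Then for each $\beta$ there is $g(\beta) \in \lambda'$ with $[\{g(\beta)\}]_{S_{\mu'}(\lambda')} \notin f_\beta(D)$, which unwinding the definition of the image ultrafilter means $f_\beta^{-1}[\{g(\beta)\}] = \{s \in S_\mu(\lambda) : g(\beta) \in f_\beta(s)\} \notin D$, hence its complement lies in $D$. Apply (b) to this $g$ to obtain finite $F \subseteq \kappa$, $G \subseteq \lambda$ with $[G] \subseteq \bigcup_{\beta \in F} f_\beta^{-1}[\{g(\beta)\}]$. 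Since $D$ covers $\lambda$ we have $[G] \in D$, and since $F$ is finite and $D$ is an ultrafilter, $\bigcup_{\beta \in F} f_\beta^{-1}[\{g(\beta)\}] \in D$ forces $f_\beta^{-1}[\{g(\beta)\}] \in D$ for some $\beta \in F$, a contradiction.

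For the direction (a) $\Rightarrow$ (b), argue contrapositively: suppose (b) fails, so there is a function $g : \kappa \to \lambda'$ such that for \emph{every} finite $F \subseteq \kappa$ and every finite $G \subseteq \lambda$ we have $[G] \not\subseteq \bigcup_{\beta \in F} f_\beta^{-1}[\{g(\beta)\}]$, i.e.\ $[G] \cap \bigcap_{\beta \in F}\bigl(S_\mu(\lambda) \setminus f_\beta^{-1}[\{g(\beta)\}]\bigr) \neq \emptyset$. This says the family consisting of all cones $[G]$ (for $G \in S_\omega(\lambda)$) together with all complements $S_\mu(\lambda) \setminus f_\beta^{-1}[\{g(\beta)\}]$ (for $\beta \in \kappa$) has the finite intersection property — here one uses that any finite subfamily involves only finitely many $\beta$'s and that the intersection of finitely many cones is again a single cone $[G']$. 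Extend this family to an ultrafilter $D$ over $S_\mu(\lambda)$. Then $D$ contains every $[\{\alpha\}]$, so $D$ covers $\lambda$; but for each $\beta \in \kappa$, $D$ contains $S_\mu(\lambda) \setminus f_\beta^{-1}[\{g(\beta)\}]$, so $f_\beta^{-1}[\{g(\beta)\}] \notin D$, whence $[\{g(\beta)\}]_{S_{\mu'}(\lambda')} \notin f_\beta(D)$, and therefore $f_\beta(D)$ does not cover $\lambda'$. This contradicts (a).

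The main obstacle, and the point requiring care, is the finite intersection property check in the (a) $\Rightarrow$ (b) direction: one must confirm that mixing finitely many ``negative'' conditions (the complements indexed by $\beta$) with the cone conditions still yields a nonempty intersection, which is exactly what the negation of (b) supplies once one notes that $\bigcap_{i} [G_i] = [\bigcup_i G_i]$ collapses finitely many cone constraints into one. Everything else is bookkeeping with the definition of the image ultrafilter $f_\beta(D)$ (namely $X \in f_\beta(D) \iff f_\beta^{-1}[X] \in D$) and the elementary fact that an ultrafilter containing a finite union contains one of the terms. I would also remark that no hypothesis relating $\kappa$ to $\lambda, \lambda'$ is needed here, consistent with the fact that this lemma underlies the equivalence (a)$\Leftrightarrow$(b) of Theorem \ref{lmklmeq} which holds for arbitrary $\kappa$.
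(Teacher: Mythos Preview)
Your proof is correct and follows essentially the same approach as the paper: both arguments pass to the contrapositive and reduce the existence of a covering ultrafilter witnessing the failure of (a) to the finite intersection property of the family $\{[\{\alpha\}] : \alpha \in \lambda\} \cup \{S_\mu(\lambda)\setminus f_\beta^{-1}[\{g(\beta)\}] : \beta \in \kappa\}$, using that finite intersections of cones are cones. The only cosmetic difference is that the paper packages both directions into a single chain of equivalences between $\neg$(a) and $\neg$(b), whereas you treat the two implications separately.
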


\begin{proof}
We show that the negation of (a) is equivalent to the negation of (b).

Indeed, (a) is false if and only if there exists an ultrafilter 
$D$ over $S_\mu(\lambda )$ which covers 
 $\lambda $ and such that, for every $\beta \in \kappa $, 
$f_\beta(D)$ does not cover $ \lambda' $.

This means that $D$ covers $ \lambda $ and, for every $ \beta \in \kappa $, there exists some
$g(\beta) \in \lambda' $ such that 
$[\{g(\beta)\}] =\{ s' \in S _{\mu'}(\lambda' )| g( \beta )\in s'\} \not\in f_\beta(D) $,
that is, 
$ f_\beta ^{-1} [\{g(\beta)\}] \not\in D$,
that is, 
$ f_\beta ^{-1} \overline{[\{g(\beta)\}]} \in D$,
since $D$ is required to be an ultrafilter. Here,
$\overline{[\{g(\beta)\}]}$
denotes the complement of 
$[\{g(\beta)\}]$
in $ S _{\mu'}(\lambda' )$.

Thus, there exists some ultrafilter $D$ which makes (a) false if and only if
there exists some function $g:\kappa\to \lambda' $
such that the set
$ \{ f_\beta ^{-1} \overline{[\{g(\beta)\}]}| \beta \in \kappa  \}
\cup
\{ [\{ \alpha \}]| \alpha \in \lambda \} 
$ 
has the finite intersection property.

Equivalently, there exists some function $g:\kappa\to \lambda' $
such that, for every finite $F \subseteq \kappa $ and every
finite $G \subseteq \lambda $,
$\bigcap_{\beta\in F}  f_\beta ^{-1} \overline{[\{g(\beta)\}]}
\cap
\bigcap_{\alpha\in G} [\{ \alpha \}] \not= \emptyset $.

Since $\bigcap_{\alpha\in G} [\{ \alpha \}]=[G]$,
the negation of the above statement is: for every function $g: \kappa \to \lambda' $ there exist finite
sets $F \subseteq \kappa $ and   $G \subseteq \lambda $ such that 
$ [G] \cap \bigcap_{\beta\in F}  f_\beta ^{-1} \overline{[\{g(\beta)\}]}
= \emptyset $.

This is clearly equivalent to (b).
\end{proof}

\begin{proof}[Proof of Theorem \ref{lmklmeq}]
(a) $\Leftrightarrow$ (b) is immediate from Lemma \ref{lem}.

(b) $\Rightarrow$ (c)
For $ \alpha \in \lambda' $ and $\beta \in \kappa $,  define 
$C_{ \alpha , \beta } = f_\beta ^{-1} [\{\alpha \} ]$.

(c) $\Rightarrow$ (b) For $\beta\in\kappa $ define $f_\beta : S_\mu(\lambda ) \to S_{\mu'}(\lambda ')$ by 
$f_\beta (x)= \{  \alpha \in \lambda' | x \in C_{ \alpha  , \beta  }\}. $

(c) $\Leftrightarrow$ (c$'$) is immediate by taking complements in $ S_\mu(\lambda )$.

(e) $\Rightarrow$ (d) is trivial.

(d) $\Rightarrow$ (a).
Let ${\mathfrak S}^+ $
be an expansion of 
${\mathfrak S}(\lambda, \mu; \lambda',\mu') $
witnessing (d).

Without loss of generality, we can assume that 
${\mathfrak S}^+$ has Skolem functions (see \cite[Section 3.3]{CK}).
Indeed, since $ \kappa \geq \sup \{  \lambda , \lambda' \}  $, adding Skolem functions
to ${\mathfrak S}^+$ involves adding at most $ \kappa $ new symbols.

Consider the set of all functions $f: U \to U'$ 
which are definable in ${\mathfrak S}^+$. Enumerate them as 
$ (f_ \beta ) _{ \beta \in \kappa } $. We are going to show that 
these functions witness (a). 

Indeed, let $D$ be an ultrafilter
 over $ S_\mu(\lambda) $ which covers $ \lambda $. 
Consider the $D$-class $Id_D$ of the identity
function  on $U= S_\mu(\lambda) $. 
Since $D$ covers $ \lambda $, then in the model
  $ {\mathfrak C} = \prod_D {\mathfrak S}^+$  we have that
$ d( \{ \alpha \} ) \subseteq  Id_D$ for every $ \alpha \in  \lambda $, 
where $d$  denotes the elementary embedding.
Trivially, $ {\mathfrak C} \models U(Id_D)$.
 Let $ {\mathfrak B} $ be the Skolem hull
of $Id_D$ in ${\mathfrak C}$. 
Since 
${\mathfrak S}^+ $ has Skolem functions, 
$ {\mathfrak B} \preceq {\mathfrak C}$ \cite[Proposition 3.3.2]{CK};
in particular, $ {\mathfrak B} \equiv {\mathfrak C}$.
By 
\L o\v s Theorem,
$ {\mathfrak C} \equiv {\mathfrak S}^+ $.
 By transitivity,
$ {\mathfrak B} \equiv {\mathfrak S}^+ $.

Since $Id_D\in B$, then
$ \mathfrak B$ is $(\lambda,\mu)$-regular, by what we have proved.
Since ${\mathfrak S}^+$
witnesses
(d), then   
$ \mathfrak B$ has an element $x'_D$ such that
 $ {\mathfrak B} \models U'(x'_D)$ and
$ {\mathfrak B} \models \{\alpha'\} \subseteq  x'_D $ for every $ \alpha'  \in \lambda' $.

Since $ {\mathfrak B} $ is the Skolem hull
of $Id_D$ in ${\mathfrak C}$, we have $x'_D =f(Id_D)$,
 for
some function $f: S \to S$ definable in ${\mathfrak S}^+$,
where $S$ is the base set of  ${\mathfrak S}^+$.

Since $f$ is definable in ${\mathfrak S}^+$,
 then also the following function $f':U \to U'$
is definable in ${\mathfrak S}^+$:
\[
f'( u)=
\begin{cases}
f(u) & \textrm{ if } u \in U \textrm{ and } f(u ) \in U' \\
\emptyset & \textrm{ if } u \in U \textrm{ and }  f(u ) \not\in U'\\
\end{cases}
\]
Since $ {\mathfrak B} \models U'(x'_D)$, and
$ {\mathfrak B} \preceq {\mathfrak C}$, then
$ \{ u \in U = S_\mu ( \lambda )| x'(u) \in U'\} \in D$.
Since $x'_D =f(Id_D)$, then 
$ \{ u \in U | x'(u)=f(Id(u))= f(u)\} \in D$.
Hence, 
$ \{ u \in U | f(u ) \in U'\} \in D$,
$ \{ u \in U | f(u )=f'(u)\} \in D$ and
$ \{ u \in U | x'(u)=f'(u )\} \in D$.
Hence, $x'_D=f'_D$.

Since $f':U \to U'$ and
$f'$ is definable  in 
${\mathfrak S}^+$, then 
$f'= f_ \beta $ for some $ \beta \in \kappa $.
We want to show that $D'= f'(D)$ covers $\lambda '$.

Indeed, for $ \alpha' \in \lambda' $,
$[\{ \alpha '\}] = \{ u' \in S _{\mu'}(\lambda' )| \alpha'\in u'\} \in f'(D) $
if and only if 
 $\{ u \in S _{\mu}(\lambda )| \alpha'\in f'(u)\} \in D $, and this is true for every 
$ \alpha' \in \lambda' $, since 
$ {\mathfrak B} \models \{\alpha'\} \subseteq  x'_D $
and $x'_D=f'_D$.

(a) $\Rightarrow$ (e). Suppose we have functions
$ (f_ \beta ) _{ \beta \in \kappa } $ as given by \ref{lmklmeq}(a).

Expand 
${\mathfrak S}(\lambda, \mu; \lambda',\mu') $
to a model 
$ {\mathfrak S}^+$ 
by adding, for each $ \beta \in \kappa $, a new function symbol representing
$f_ \beta $ (by abuse of notation, in what follows we shall
write $f_ \beta $ both for the function itself and
for the symbol that represents it).

Suppose that $\mathfrak B \equiv {\mathfrak S}^+$ and 
$ \mathfrak B$ is $( \lambda, \mu)$-regular, that is, 
$ \mathfrak B$ has an element $x$ such that 
$ {\mathfrak B} \models U(x) $, and
$ {\mathfrak B} \models  \{ \alpha \} \subseteq   x $
for every $ \alpha \in \lambda $.

For every formula $ \phi (z)$ with just one variable $z$ 
in the language of  
$ {\mathfrak S}^+$ let 
$E_ \phi = \{ s \in S_{\mu}(\lambda ) | 
{\mathfrak S}^+ \models \phi ( s )\}  $.
Let $F= \{ E_ \phi | \mathfrak B \models \phi (x)\} $.
Since the intersection of any two members of $F$ is still in 
$F$, and $\emptyset \not\in F$, then $F$ can be extended to 
an ultrafilter $D$ on $ S_{\mu}(\lambda )$. 

For every $ \alpha\in  \lambda $,
consider the formula $ \phi (z) \equiv \{ \alpha \} \subseteq  z $.
 We get
$E_ \phi = \{ s \in S_{\mu}(\lambda ) |  
{\mathfrak S}^+ \models \{ \alpha \} \subseteq   s \}=
[\{\alpha \} ]  $. On the other side, 
since $ {\mathfrak B} \models  \{ \alpha \} \subseteq   x $, then
by the definition of $F$ we have
$ E_ \phi = [\{\alpha \} ] \in F \subseteq D$.
Thus, $D$ covers $ \lambda $.  

By (a), 
$ f_ \beta (D) $ covers  $ \lambda' $, for
some $ \beta \in \kappa$. This means that
$[\{\alpha' \} ] = \{ s' \in S_{\mu'}(\lambda' ) |\{\alpha' \} \subseteq s' \} \in f_ \beta (D)$, 
for every $ \alpha' \in \lambda'  $.
That is,
$\{ s \in S_{\mu}(\lambda ) | \{ \alpha' \} \subseteq   f_ \beta (s) \} \in D$ 
for every $ \alpha' \in \lambda'  $.

For every $ \alpha' \in \lambda'  $
 consider the formula 
$ \psi (z) \equiv  \{ \alpha' \} \subseteq   f_ \beta (z) $.
By the previous paragraph, 
$E_ \psi \in D$.
Notice that 
$E _{\neg \psi}  $ is the complement of 
$E_ \psi $ in $ S_ \mu (\lambda )$.
Since $D$ is proper,
and $ E_ \psi \in D$, then
$E _{\neg \psi} \not\in D $.
Since $D$ extends $F$,
and 
either $E_ \psi \in F$
or $E _{\neg \psi}  \in F$,
 we necessarily have
 $E_ \psi \in F$, that is, 
$\mathfrak B \models \psi (x)$, that is, 
$\mathfrak B \models  \{ \alpha' \} \subseteq   f_ \beta (x) $.

Moreover, since $ f_ \beta : S_\mu(\lambda ) \to S_{\mu'}(\lambda ')$,
$\mathfrak B \models U(x)$
and $ \mathfrak B \equiv \mathfrak S^+$, then 
$\mathfrak B \models U'(f_ \beta (x)) $.

Thus, we have proved that 
$ \mathfrak B$ has an element $y= f_ \beta (x)$ such that 
$\mathfrak B \models U'(y) $ and 
$ {\mathfrak B} \models  \{ \alpha' \} \subseteq y $ for every $ \alpha' \in \lambda' $.
This means that $ \mathfrak B$ is $( \lambda', \mu')$-regular.

The equivalence of (f) and (g) with the other conditions shall be proved elsewhere.
\end{proof}

\section{A multicardinal generalization}\label{2} 

Let us recall some generalizations of Definitions
\ref{lmklm} and \ref{deffromarx}, generalizations 
introduced in the statement of
\cite[Theorem 0.20]{arxiv}. 

\begin{definitions}\label{multilmklm}  
Recall from Definition \ref{lmklm} that  an ultrafilter $D$ over $S_\mu(\lambda )$
covers $\lambda $ if and only if $[\{ \alpha \}] \in D $,
for every $ \alpha \in \lambda $.

Suppose that $\kappa $ is any cardinal, and 
$\lambda$, $\mu$, $(\lambda_\beta )_{\beta \in \kappa}$, $(\mu_\beta )_{\beta \in \kappa}$ 
are infinite cardinals such that   $\lambda\geq\mu$,
 and  $\lambda_\beta \geq\mu_\beta $ for every $\beta \in \kappa $.

The relation 
$(\lambda, \mu) {\Rightarrow} \bigvee_{\beta \in \kappa } (\lambda_\beta , \mu_\beta )$ 
holds if and only if  there are $\kappa $ functions 
$(f_\beta )_{\beta \in \kappa }$
such that, for every $\beta\in\kappa, $
$f_\beta  : S_\mu(\lambda ) \to S_{\mu_\beta }(\lambda_\beta )$ and 
such that whenever $D$ is an ultrafilter
over $S_\mu(\lambda )$
which covers $\lambda $,
then for some $\beta\in\kappa $ it happens that
$f_\beta (D)$ covers $ \lambda _\beta $.

Notice that in the case when 
$\lambda_\beta =\lambda'$ and
$\mu_\beta = \mu'$
for every $\beta \in \kappa $
then 
$(\lambda, \mu) {\Rightarrow} \bigvee_{\beta \in \kappa } (\lambda_\beta , \mu_\beta )$ 
is the same as 
$(\lambda, \mu) \stackrel{\kappa}{\Rightarrow} (\lambda', \mu')$.

${\mathfrak S}(\lambda, \mu; \lambda_\beta ,\mu_\beta)_{\beta \in \kappa}$ 
 denotes the model
$\langle S_{\mu''}(\lambda ''), \subseteq, U, U_\beta , \{\alpha \} \rangle_{\alpha\in \lambda'',\beta \in \kappa}$,
where
$\lambda''= \sup  \{ \lambda, \sup_{\beta \in\kappa }\lambda_\beta  \} $,
$\mu''= \sup  \{ \mu, \sup_{\beta \in\kappa }\mu_\beta  \}$,
$U(x)$ if and only if $x \in S _\lambda (\mu)$, and, for $\beta\in\kappa $,
$U_\beta (y)$ if and only if $y\in  S _{ \lambda_\beta  } (\mu_\beta 
 )$. 

If ${\mathfrak S}^+ $ is an expansion of 
${\mathfrak S}(\lambda, \mu; \lambda_\beta ,\mu_\beta)_{\beta \in \kappa}$, 
and $ {\mathfrak B} \equiv {\mathfrak S}^+ $, then, for $ \beta \in \kappa $, 
we say that ${\mathfrak B}$ is $(\lambda_ \beta , \mu_ \beta )$-\emph{regular}  
if and only if there is $b \in B$ such that 
$ {\mathfrak B} \models U_ \beta (b) $, and
$ {\mathfrak B} \models  \{ \alpha \} \subseteq   b $
for every $ \alpha \in \lambda_ \beta  $.
The definition of a  $(\lambda, \mu)$-regular
extension is as in 
Definition 
\ref{deffromarx}
\end{definitions}

\begin{thm}\label{multilmklmeq} 
Suppose that $\kappa $ is any cardinal, and
$\lambda$, $\mu$, $(\lambda_\beta )_{\beta \in \kappa}$, $(\mu_\beta )_{\beta \in \kappa}$ 
are infinite cardinals such that   $\lambda\geq\mu$,
 and  $\lambda_\beta \geq\mu_\beta $ for every $\beta \in \kappa $.

Then the following conditions are equivalent.

\smallskip

(a) $(\lambda, \mu) {\Rightarrow} \bigvee_{\beta \in \kappa } (\lambda_\beta , \mu_\beta )$ holds.

\smallskip

(b) There are $\kappa $ functions 
$(f_\beta )_{\beta \in \kappa }$
such that, for every $\beta\in\kappa $,
$f_\beta  : S_\mu(\lambda ) \to S_{\mu_\beta }(\lambda_\beta )$ and 
such that 
for every function $g \in \prod _{\beta \in \kappa }\lambda _\beta   $ there exist finite
sets $F \subseteq \kappa $ and   $G \subseteq \lambda $ such that 
$ [G] \subseteq
 \bigcup_{\beta\in F}  f_\beta ^{-1} [\{g(\beta)\}]$.

\smallskip

(c) There is a family $ (C_{ \alpha  , \beta }) _{ \alpha\in\lambda_\beta   , \beta \in \kappa}  $ 
of subsets of $ S_ \mu(\lambda)$ such that:

(i) For every $ \beta\in\kappa$ and  every  $H \subseteq \lambda_ \beta  $, if
 $|H|\geq \mu_\beta $ then
 $\bigcap_{\alpha  \in H} C_{ \alpha  , \beta  } = \emptyset  $.

 (ii) For every function $g \in \prod _{\beta \in \kappa }\lambda _\beta   $ there exist finite
sets $F \subseteq \kappa $ and   $G \subseteq \lambda $ such that 
$ [G] \subseteq  \bigcup_{\beta\in F} C_{ g(\beta) , \beta  }$.

 \smallskip

(c$'$) There is a family $ (B_{ \alpha  , \beta }) _{ \alpha\in\lambda_\beta   , \beta \in \kappa}  $ 
of subsets of $ S_ \mu(\lambda)$ such that:

 (i) For every $ \beta\in\kappa$ and  every  $H \subseteq \lambda_\beta  $, if
 $|H|\geq \mu_\beta $ then
 $\bigcup_{\alpha  \in H} B_{ \alpha  , \beta  } = S_ \mu(\lambda) $.

 (ii) For every function $g \in \prod _{\beta \in \kappa }\lambda _\beta   $ there exist finite
sets $F \subseteq \kappa $ and   $G \subseteq \lambda $ such that 
$ [G] \cap \bigcap_{\beta\in F} B_{ g(\beta) , \beta  } 
= \emptyset $.

\smallskip

Suppose in addition that
$\kappa \geq \lambda $ and $\kappa \geq \lambda_\beta $
for all $\beta\in\kappa $,
and that, for every $\beta_0\in \kappa $,
$ | \{\beta\in\kappa| \lambda_\beta =\lambda_{\beta_0} \text{ and } 
\mu_\beta =\mu_{\beta_0} \}  |= \kappa $.
Then the preceding conditions are also equivalent to:

\smallskip

(d) ${\mathfrak S}(\lambda, \mu; \lambda_\beta ,\mu_\beta)_{\beta \in \kappa}$ 
 has 
an expansion
(equivalently, a multi-sorted expansion)
${\mathfrak S}^+ $ 
with at most $\kappa $ new symbols and such that whenever 
$ {\mathfrak B} \equiv {\mathfrak S}^+ $ and 
${\mathfrak B}$ is $(\lambda, \mu)$-regular, then 
there is $\beta \in \kappa $ such that 
${\mathfrak B}$ is $(\lambda_\beta , \mu_\beta )$-regular.

\smallskip

Suppose further that 
 $\lambda_\beta =\mu_\beta $ is a regular cardinal for every $\beta\in \kappa   $.
Then the preceding conditions are also equivalent to:

\smallskip

(e) Every $\kappa$-$(\lambda,\mu)$-compact logic is
 $\kappa$-$(\lambda_\beta ,\lambda_\beta )$-compact for some $\beta \in \kappa $. 

\smallskip

(f) Every $\kappa$-$(\lambda,\mu)$-compact logic generated by 
$\sup_{\beta\in\kappa }\lambda_\beta  $
cardinality quantifiers is 
$(\lambda_\beta ,\lambda_\beta )$-compact for some $\beta \in \kappa $. 
\end{thm}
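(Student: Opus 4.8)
The argument runs parallel to the proof of Theorem~\ref{lmklmeq} throughout, with the target cardinals $(\lambda',\mu')$ everywhere replaced by the $\beta$-indexed pairs $(\lambda_\beta,\mu_\beta)$. For the unconditional block, the equivalence (a)~$\Leftrightarrow$~(b) comes from the obvious multicardinal form of Lemma~\ref{lem}: the only change is that ``$f_\beta(D)$ does not cover $\lambda_\beta$'' now reads ``$[\{g(\beta)\}]\notin f_\beta(D)$ for some $g(\beta)\in\lambda_\beta$'', so the choice function $g$ ranges over $\prod_{\beta\in\kappa}\lambda_\beta$ rather than over all functions from $\kappa$ to $\lambda'$; otherwise the proof is verbatim, and the failure of~(a) is again the existence of a $g$ for which $\{\,f_\beta^{-1}\overline{[\{g(\beta)\}]}:\beta\in\kappa\,\}\cup\{\,[\{\alpha\}]:\alpha\in\lambda\,\}$ has the finite intersection property. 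For (b)~$\Rightarrow$~(c) I would set $C_{\alpha,\beta}=f_\beta^{-1}[\{\alpha\}]$ for $\alpha\in\lambda_\beta$, noting that (c)(i) holds because $|f_\beta(x)|<\mu_\beta$ for every $x$, so no $H$ with $|H|\geq\mu_\beta$ can be contained in $f_\beta(x)$. For (c)~$\Rightarrow$~(b) I would set $f_\beta(x)=\{\alpha\in\lambda_\beta:x\in C_{\alpha,\beta}\}$; condition (c)(i) forces $|f_\beta(x)|<\mu_\beta$ (otherwise $H=f_\beta(x)$ would violate it), so $f_\beta$ maps $S_\mu(\lambda)$ into $S_{\mu_\beta}(\lambda_\beta)$, and $f_\beta^{-1}[\{\alpha\}]=C_{\alpha,\beta}$. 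Finally (c)~$\Leftrightarrow$~(c$'$) is immediate on complementing each $C_{\alpha,\beta}$ in $S_\mu(\lambda)$. None of this uses a hypothesis on $\kappa$.

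Assuming now the additional hypotheses on $\kappa$ and the repetition condition, I would turn to~(d). As in Theorem~\ref{lmklmeq} the expansion form and the multi-sorted expansion form of~(d) are readily seen to be equivalent, the single-sorted one trivially yielding the multi-sorted one, so it suffices to work with ordinary expansions. For (a)~$\Rightarrow$~(d): given witnessing functions $(f_\beta)_{\beta\in\kappa}$, expand ${\mathfrak S}(\lambda,\mu;\lambda_\beta,\mu_\beta)_{\beta\in\kappa}$ by a new unary function symbol interpreting each $f_\beta$ (extended outside $U$ by the constant $\emptyset$). Since $\lambda_\beta\leq\kappa$ for every $\beta\in\kappa$ we get $\lambda''\leq\kappa$, so the base language already has at most $\kappa$ symbols and we are adding exactly $\kappa$ more. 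Then, if ${\mathfrak B}\equiv{\mathfrak S}^+$ is $(\lambda,\mu)$-regular, witnessed by $x$, I would extend the trace filter $F=\{E_\phi:{\mathfrak B}\models\phi(x)\}$, $E_\phi=\{s\in S_\mu(\lambda):{\mathfrak S}^+\models\phi(s)\}$, to an ultrafilter $D$ on $S_\mu(\lambda)$; taking $\phi(z)\equiv\{\alpha\}\subseteq z$ shows $D$ covers $\lambda$, so by~(a) there is $\beta$ with $f_\beta(D)$ covering $\lambda_\beta$, whence for each $\alpha'\in\lambda_\beta$ the set $E_\psi$ with $\psi(z)\equiv\{\alpha'\}\subseteq f_\beta(z)$ lies in $D$ and hence in $F$, since $D$ is proper and extends $F$; this says ${\mathfrak B}\models\{\alpha'\}\subseteq f_\beta(x)$, and together with ${\mathfrak B}\models U_\beta(f_\beta(x))$ it gives $(\lambda_\beta,\mu_\beta)$-regularity of~${\mathfrak B}$. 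This is just the (a)~$\Rightarrow$~(e) argument of Theorem~\ref{lmklmeq}.

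The one direction in which the new hypotheses genuinely enter is (d)~$\Rightarrow$~(a), and it is where I expect the only real care is needed. As before I may assume the witnessing ${\mathfrak S}^+$ has Skolem functions, since $\lambda''\leq\kappa$ keeps the total number of symbols at most $\kappa$. For each pair $\tau=(\lambda',\mu')$ occurring among the $(\lambda_\beta,\mu_\beta)$, put $K_\tau=\{\beta\in\kappa:(\lambda_\beta,\mu_\beta)=\tau\}$; by the repetition hypothesis $|K_\tau|=\kappa$, so the at most $\kappa$ functions $U\to U^{(\tau)}$ definable in ${\mathfrak S}^+$ (where $U^{(\tau)}$ names the predicate interpreted by $S_{\mu'}(\lambda')$) can be enumerated as $(f_\beta)_{\beta\in K_\tau}$, padded if necessary by the constant $\emptyset$; as the $K_\tau$ partition $\kappa$, this produces a family $f_\beta:S_\mu(\lambda)\to S_{\mu_\beta}(\lambda_\beta)$ indexed by all of $\kappa$ such that every definable map into any sort occurs among the $f_\beta$ with an index matching that sort. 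Given an ultrafilter $D$ on $S_\mu(\lambda)$ covering $\lambda$, I would form ${\mathfrak C}=\prod_D{\mathfrak S}^+$, take the Skolem hull ${\mathfrak B}$ of $Id_D$ (which is $(\lambda,\mu)$-regular and elementarily equivalent to ${\mathfrak S}^+$), apply~(d) to get some $\beta$ and a witness $x'_D=f(Id_D)$ to $(\lambda_\beta,\mu_\beta)$-regularity of ${\mathfrak B}$, with $f$ definable, then correct $f$ to a definable $f':U\to U^{(\tau)}$ with $\tau=(\lambda_\beta,\mu_\beta)$ exactly as in Theorem~\ref{lmklmeq}, so that $x'_D=f'_D$, and conclude $f'=f_\gamma$ for some $\gamma\in K_\tau$; then $(\lambda_\gamma,\mu_\gamma)=(\lambda_\beta,\mu_\beta)$, and $x'_D=f'_D$ forces $f_\gamma(D)$ to cover $\lambda_\gamma$. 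Thus $(f_\beta)_{\beta\in\kappa}$ witnesses~(a). The main obstacle is precisely this last piece of bookkeeping: one must have $\kappa$ many indices available for each target type, which is exactly what the hypothesis $|\{\beta\in\kappa:\lambda_\beta=\lambda_{\beta_0}\text{ and }\mu_\beta=\mu_{\beta_0}\}|=\kappa$ supplies. Finally, the equivalence of the logic-theoretic conditions (e) and (f) with (a)--(d), under the further hypothesis that each $\lambda_\beta=\mu_\beta$ is a regular cardinal, is proved by the same methods that handle conditions (f), (g) of Theorem~\ref{lmklmeq}, and, as announced there, will be given elsewhere.
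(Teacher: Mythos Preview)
Your proposal is correct and follows essentially the same approach as the paper, whose proof consists solely of the sentence ``There is no essential difference with the proof of Theorem~\ref{lmklmeq}.'' You have simply unpacked that sentence, and in particular you have correctly isolated the one place where the repetition hypothesis $|\{\beta:\ (\lambda_\beta,\mu_\beta)=(\lambda_{\beta_0},\mu_{\beta_0})\}|=\kappa$ is genuinely needed, namely in the bookkeeping for (d)~$\Rightarrow$~(a) so that every definable map into each target sort can be assigned an index $\gamma$ with matching $(\lambda_\gamma,\mu_\gamma)$.
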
 

\begin{proof} There is no essential difference with the proof of 
Theorem \ref{lmklmeq}.
\end{proof} 

Notice that Theorem \ref{lmklmeq}.
is the particular case of Theorem \ref{multilmklmeq}
when 
$\lambda_\beta =\lambda'$ and
$\mu_\beta = \mu'$
for every $\beta \in \kappa $.

\section{The ``almost'' generalizations}\label{3}

Versions of Theorem \ref{lmklmeq} can be given for the ``almost'' variants of 
$(\lambda, \mu) \stackrel{\kappa}{\Rightarrow} (\lambda', \mu')$. In order to state the above remark precisely, we need to introduce some variations on 
Definitions \ref{lmklm} and \ref{deffromarx} (see \cite[Definition 0.14]{arxiv}).

\begin{definition}\label{almdef}    
We say that an ultrafilter $D$ over $S _{\mu}(\lambda )$
\emph{almost covers} $\lambda $ if and only if 
$| \{ \alpha \in \lambda| [\alpha ] \in D \}|= \lambda  $.

The relation $\alm (\lambda, \mu) \stackrel{\kappa}{\Rightarrow} (\lambda', \mu')$ holds if and only if 
there are $\kappa $ functions 
$(f_\beta )_{\beta \in \kappa }: S_\mu(\lambda ) \to S_{\mu'}(\lambda ')$
such that whenever $D$ is an ultrafilter over $S_\mu(\lambda )$,
 and $D$ almost covers $\lambda $,
then for some $\beta\in\kappa $ it happens that
$f_\beta (D)$ covers $\lambda '$.

The relations $\alm (\lambda, \mu) \stackrel{\kappa}{\Rightarrow} \alm (\lambda', \mu')$ and $(\lambda, \mu) \stackrel{\kappa}{\Rightarrow} \alm (\lambda', \mu')$
are defined similarly.
 
Notice that $(\lambda, \mu) \stackrel{1}{\Rightarrow} \alm (\lambda, \mu)$ trivially.
Moreover, 
if $\nu $ is a regular cardinal, then
$\alm (\nu, \nu ) \stackrel{1}{\Rightarrow} (\nu, \nu )$,
as witnessed by $f: S_\nu(\nu) \to S_\nu(\nu)$ 
defined by $f(x)=\sup x$
(cf. also \cite[Lemma 0.16(ii)]{arxiv}).
Thus, in the next results, if either $\nu=\lambda=\mu $, 
or $\nu'=\lambda'=\mu' $, or both, regular cardinals, then 
$\alm (\nu, \nu ) $ and $ (\nu, \nu )$ can be used interchangeably.
A similar remark applies to almost $(\nu, \nu)$-regularity and $(\nu, \nu)$-regularity, as defined below.
 
Recall the definition of the model 
${\mathfrak S}(\lambda, \mu; \lambda',\mu') $
from Definition \ref{deffromarx}.

If ${\mathfrak S}^+ $ is an expansion of 
${\mathfrak S}(\lambda, \mu; \lambda'\mu') $,
and $ {\mathfrak B} \equiv {\mathfrak S}^+ $,
we say that  ${\mathfrak B}$ is \emph{almost} $(\lambda, \mu)$-\emph{regular}  
if and only if there is $b \in B$ such that 
$ {\mathfrak B} \models U(b) $, and
$ |\{\alpha \in \lambda |
 {\mathfrak B} \models  \{ \alpha \} \subseteq   b \}| =  \lambda $.
The notion of 
\emph{almost} $(\lambda', \mu')$-\emph{regularity} 
is defined similarly.
\end{definition}

\begin{thm}\label{lmkalmlmeq} 
Suppose that  $ \lambda \geq \mu$, $ \lambda' \geq \mu'$
 are infinite cardinals, and $ \kappa $ is any cardinal. 

Then the following conditions are equivalent.

\smallskip

(a) $(\lambda, \mu) \stackrel{\kappa}{\Rightarrow} \alm (\lambda', \mu')$ holds.

\smallskip

(b) There are $ \kappa $ functions 
$ (f_ \beta ) _{ \beta \in \kappa }: S_\mu(\lambda ) \to S_{\mu'}(\lambda ')$
such that 
for every function $g: \kappa \to S _{ \lambda' } (\lambda') $ there exist a finite
set  $G \subseteq \lambda $, a finite set $F \subseteq \kappa $, and, for
$ \beta \in F$, 
finite sets $H_ \beta \subseteq \lambda'\setminus g( \beta ) $ 
  such that 
$ [G] \subseteq
 \bigcup_{\beta\in F, \beta ^* \in H_ \beta }  f_\beta ^{-1} [\{g(\beta^*)\}]$.

\smallskip

(c) There is a family $ (C_{ \alpha  , \beta }) _{ \alpha\in\lambda'  , \beta \in \kappa}  $ 
of subsets of $ S_ \mu(\lambda)$ such that:

(i) For every $ \beta\in\kappa$ and  every  $H \subseteq \lambda' $, if
 $|H|\geq \mu'$ then
 $\bigcap_{\alpha  \in H} C_{ \alpha  , \beta  } = \emptyset  $.

 (ii) For every function $g: \kappa \to S _{ \lambda' } (\lambda') $ there exist a finite
set  $G \subseteq \lambda $, a finite set $F \subseteq \kappa $, and, for
$ \beta \in F$, 
finite sets $H_ \beta \subseteq \lambda'\setminus g( \beta ) $ 
  such that 
$ [G] \subseteq  \bigcup_{\beta\in F,\beta ^* \in H_ \beta } C_{ g(\beta^*) , \beta }$.

 \smallskip

(c$'$) 
There is a family $ (B_{ \alpha  , \beta }) _{ \alpha\in\lambda'  , \beta \in \kappa}  $ 
of subsets of $ S_ \mu(\lambda)$ such that:

(i) For every $ \beta\in\kappa$ and  every  $H \subseteq \lambda' $, if
 $|H|\geq \mu'$ then
 $\bigcup_{\alpha  \in H} B_{ \alpha  , \beta  } = S_ \mu(\lambda) $.

 (ii) For every function $g: \kappa \to S _{ \lambda' } (\lambda') $ there exist a finite
set  $G \subseteq \lambda $, a finite set $F \subseteq \kappa $, and, for
$ \beta \in F$, 
finite sets $H_ \beta \subseteq \lambda'\setminus g( \beta ) $ 
  such that 
$ [G] \cap \bigcap_{\beta\in F, \beta ^* \in H_ \beta } B_{ g(\beta^*) , \beta  } 
= \emptyset $.

\smallskip

If in addition
$\kappa \geq \sup \{  \lambda , \lambda' \} $,
then the preceding conditions are also equivalent to:

\smallskip

(d) ${\mathfrak S}(\lambda, \mu; \lambda',\mu') $ has 
an expansion
(equivalently, a multi-sorted expansion)
${\mathfrak S}^+ $ 
with at most $\kappa $ new symbols such that
whenever 
$ {\mathfrak B} \equiv {\mathfrak S}^+ $ and ${\mathfrak B}$ is $(\lambda, \mu)$-regular, then 
${\mathfrak B}$ is almost $(\lambda', \mu')$-regular.
\end{thm}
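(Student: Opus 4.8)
The plan is to follow the blueprint of the proof of Theorem \ref{lmklmeq}, making the routine adjustments needed to accommodate the ``almost'' hypothesis on the target side. The starting point is an ``almost'' analogue of Lemma \ref{lem}: one shows that, given functions $(f_\beta)_{\beta\in\kappa}$ from $S_\mu(\lambda)$ to $S_{\mu'}(\lambda')$, condition (a) fails if and only if there is an ultrafilter $D$ on $S_\mu(\lambda)$ covering $\lambda$ such that for every $\beta$, $f_\beta(D)$ does \emph{not} almost cover $\lambda'$. Spelling out the negation of ``$f_\beta(D)$ almost covers $\lambda'$'': the set $\{\alpha'\in\lambda'\mid [\{\alpha'\}]\in f_\beta(D)\}$ has cardinality $<\lambda'$, so it is contained in some $g(\beta)\in S_{\lambda'}(\lambda')$, and for every $\alpha'\in\lambda'\setminus g(\beta)$ we have $[\{\alpha'\}]\notin f_\beta(D)$, i.e.\ $f_\beta^{-1}\overline{[\{\alpha'\}]}\in D$. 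Thus (a) fails iff for some $g:\kappa\to S_{\lambda'}(\lambda')$ the family $\{f_\beta^{-1}\overline{[\{\alpha'\}]}\mid \beta\in\kappa,\ \alpha'\in\lambda'\setminus g(\beta)\}\cup\{[\{\alpha\}]\mid\alpha\in\lambda\}$ has the finite intersection property; taking the negation and using $\bigcap_{\alpha\in G}[\{\alpha\}]=[G]$ gives exactly (b). This proves (a)$\Leftrightarrow$(b).

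The implications (b)$\Leftrightarrow$(c)$\Leftrightarrow$(c$'$) go through verbatim as in Theorem \ref{lmklmeq}: set $C_{\alpha,\beta}=f_\beta^{-1}[\{\alpha\}]$ in one direction, and $f_\beta(x)=\{\alpha\in\lambda'\mid x\in C_{\alpha,\beta}\}$ in the other (note that clause (i) — intersections over $\mu'$-sized $H$ are empty — is precisely what guarantees $f_\beta(x)\in S_{\mu'}(\lambda')$, and this clause is literally unchanged); and (c)$\Leftrightarrow$(c$'$) is complementation in $S_\mu(\lambda)$. None of this interacts with the ``almost'' modification, since that modification lives entirely in clause (ii)/the statement of (b).

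For (a)$\Leftrightarrow$(d) under $\kappa\geq\sup\{\lambda,\lambda'\}$, I would reuse the two arguments from the proof of Theorem \ref{lmklmeq} with one change each. For (d)$\Rightarrow$(a): take $\mathfrak S^+$ witnessing (d), add Skolem functions (costing $\leq\kappa$ symbols), enumerate as $(f_\beta)_{\beta\in\kappa}$ all definable $f:U\to U'$; given $D$ covering $\lambda$, form $\mathfrak C=\prod_D\mathfrak S^+$ and let $\mathfrak B$ be the Skolem hull of $Id_D$, so $\mathfrak B\equiv\mathfrak S^+$ and $\mathfrak B$ is $(\lambda,\mu)$-regular; by (d), $\mathfrak B$ is \emph{almost} $(\lambda',\mu')$-regular, witnessed by some $x'_D=f(Id_D)$; as before replace $f$ by the definable $f':U\to U'$ that sends $u$ to $f(u)$ when $f(u)\in U'$ and to $\emptyset$ otherwise, check $x'_D=f'_D$ and $f'=f_\beta$ for some $\beta$; then $|\{\alpha'\in\lambda'\mid\mathfrak B\models\{\alpha'\}\subseteq x'_D\}|=\lambda'$ translates, via the usual $[\{\alpha'\}]\in f_\beta(D)\iff\{u\mid\alpha'\in f_\beta(u)\}\in D$, into $|\{\alpha'\mid[\{\alpha'\}]\in f_\beta(D)\}|=\lambda'$, i.e.\ $f_\beta(D)$ almost covers $\lambda'$. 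For (a)$\Rightarrow$(d): expand $\mathfrak S(\lambda,\mu;\lambda',\mu')$ by symbols for the $f_\beta$; given $\mathfrak B\equiv\mathfrak S^+$ that is $(\lambda,\mu)$-regular via $x$, build the ultrafilter $D=$ the filter generated by $\{E_\phi\mid\mathfrak B\models\phi(x)\}$ extended to an ultrafilter, which covers $\lambda$; by (a) some $f_\beta(D)$ almost covers $\lambda'$, so $\{\alpha'\in\lambda'\mid\{u\mid\{\alpha'\}\subseteq f_\beta(u)\}\in D\}$ has size $\lambda'$; for each such $\alpha'$ the formula $\psi(z)\equiv\{\alpha'\}\subseteq f_\beta(z)$ has $E_\psi\in D$, and since $D$ extends $F$ and is proper, $E_\psi\in F$, whence $\mathfrak B\models\{\alpha'\}\subseteq f_\beta(x)$; together with $\mathfrak B\models U'(f_\beta(x))$ this shows $\mathfrak B$ is almost $(\lambda',\mu')$-regular. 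The ``multi-sorted versus ordinary expansion'' equivalence is the same soft observation as in Theorem \ref{lmklmeq}.

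The step I expect to require the most care is the ``almost'' analogue of Lemma \ref{lem}, specifically getting the quantifier structure in (b) exactly right: the set of ``bad'' target points for $f_\beta(D)$ is a $<\lambda'$-sized set, hence bounded by an element $g(\beta)$ of $S_{\lambda'}(\lambda')$ rather than by a single ordinal, and the finite subsets $H_\beta$ in (b) must be drawn from the \emph{complement} $\lambda'\setminus g(\beta)$ — it is precisely on $\lambda'\setminus g(\beta)$ that we may assert $f_\beta^{-1}\overline{[\{\alpha'\}]}\in D$. Everything else is a transcription of arguments already carried out in the proof of Theorem \ref{lmklmeq}, so I would simply indicate the modifications and refer back rather than rewriting the common parts in full.
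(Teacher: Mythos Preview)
Your proposal is correct and follows exactly the approach the paper intends: the paper's own proof of this theorem is simply ``Similar to the proof of Theorem~\ref{lmklmeq},'' and you have spelled out precisely those modifications (the ``almost'' analogue of Lemma~\ref{lem}, with $g$ now ranging over $S_{\lambda'}(\lambda')$ and the finite sets $H_\beta$ taken from $\lambda'\setminus g(\beta)$, plus the straightforward adaptations of the (b)$\Leftrightarrow$(c)$\Leftrightarrow$(c$'$) and (a)$\Leftrightarrow$(d) arguments). There is nothing to add.
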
 

\begin{thm}\label{almlmklmeq} 
Suppose that  $ \lambda \geq \mu$, $ \lambda' \geq \mu'$
 are infinite cardinals, and $ \kappa $ is any cardinal. 

Then the following conditions are equivalent.

\smallskip

(a) $\alm (\lambda, \mu) \stackrel{\kappa}{\Rightarrow} (\lambda', \mu')$ holds.

\smallskip

(b) There are $ \kappa $ functions 
$ (f_ \beta ) _{ \beta \in \kappa }: S_\mu(\lambda ) \to S_{\mu'}(\lambda ')$
such that 
for every $T \subseteq \lambda $ with $|T|= \lambda $, and  
for every function $g: \kappa \to \lambda' $ there exist finite
sets $F \subseteq \kappa $ and   $G \subseteq T$ such that 
$ [G] \subseteq
 \bigcup_{\beta\in F}  f_\beta ^{-1} [\{g(\beta)\}]$.

\smallskip

(c) There is a family $ (C_{ \alpha  , \beta }) _{ \alpha\in\lambda'  , \beta \in \kappa}  $ 
of subsets of $ S_ \mu(\lambda)$ such that:

(i) For every $ \beta\in\kappa$ and  every  $H \subseteq \lambda' $, if
 $|H|\geq \mu'$ then
 $\bigcap_{\alpha  \in H} C_{ \alpha  , \beta  } = \emptyset  $.

 (ii) For every function $g: \kappa \to \lambda' $ 
and for every $T \subseteq \lambda $ with $|T|= \lambda $
there exist finite
sets $F \subseteq \kappa $ and   $G \subseteq T$ such that 
$ [G] \subseteq  \bigcup_{\beta\in F} C_{ g(\beta) , \beta  }$.

 \smallskip

(c$'$) There is a family $ (B_{ \alpha  , \beta }) _{ \alpha\in\lambda'  , \beta \in \kappa}  $ 
of subsets of $ S_ \mu(\lambda)$ such that:

(i) For every $ \beta\in\kappa$ and  every  $H \subseteq \lambda' $, if
 $|H|\geq \mu'$ then
 $\bigcup_{\alpha  \in H} B_{ \alpha  , \beta  } = S_ \mu(\lambda) $.

 (ii) For every function $g: \kappa \to \lambda' $ and
for every $T \subseteq \lambda $ with $|T|= \lambda $
there exist finite
sets $F \subseteq \kappa $ and   $G \subseteq T $ such that 
$ [G] \cap \bigcap_{\beta\in F} B_{ g(\beta) , \beta  } 
= \emptyset $.

\smallskip

If in addition
$\kappa \geq \sup \{  \lambda , \lambda' \} $,
then the preceding conditions are also equivalent to:

\smallskip

(d) ${\mathfrak S}(\lambda, \mu; \lambda',\mu') $ has 
an expansion (equivalently, a multi-sorted expansion)
${\mathfrak S}^+ $ 
with at most $\kappa $ new symbols such that whenever 
$ {\mathfrak B} \equiv {\mathfrak S}^+ $ and 
${\mathfrak B}$ is almost $ (\lambda, \mu)$-regular, then 
${\mathfrak B}$ is $(\lambda', \mu')$-regular.

\smallskip

If in addition
$\kappa \geq \sup \{  \lambda , \lambda' \} $, and $\lambda'=\mu'$ is a regular
cardinal
then the preceding conditions are also equivalent to:

\smallskip

(f) Every almost $\kappa$-$(\lambda,\mu)$-compact logic is  
$\kappa$-$(\lambda',\lambda')$-compact. 

\smallskip

(g) Every almost $\kappa$-$(\lambda,\mu)$-compact logic generated by $\lambda' $
cardinality quantifiers is $(\lambda',\lambda')$-compact. 
\end{thm}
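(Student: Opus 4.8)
The plan is to follow the template established by Lemma \ref{lem} and the proof of Theorem \ref{lmklmeq}, adapting each implication to the ``almost covers'' hypothesis on the domain side. The only genuinely new combinatorial input is an analogue of Lemma \ref{lem} in which the condition ``$D$ covers $\lambda$'' is replaced by ``$D$ almost covers $\lambda$'', i.e. $|\{\alpha\in\lambda \mid [\{\alpha\}]\in D\}|=\lambda$. So I would first prove:

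\begin{lemma*}
With $(f_\beta)_{\beta\in\kappa}$ a given family of functions $S_\mu(\lambda)\to S_{\mu'}(\lambda')$, the following are equivalent: (a) whenever $D$ is an ultrafilter on $S_\mu(\lambda)$ that almost covers $\lambda$, then $f_\beta(D)$ covers $\lambda'$ for some $\beta\in\kappa$; (b) for every $T\subseteq\lambda$ with $|T|=\lambda$ and every $g:\kappa\to\lambda'$ there are finite $F\subseteq\kappa$ and $G\subseteq T$ with $[G]\subseteq\bigcup_{\beta\in F}f_\beta^{-1}[\{g(\beta)\}]$.
\end{lemma*}

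For this I would again negate both sides. The negation of (a) says there is an ultrafilter $D$ such that $\{\alpha \mid [\{\alpha\}]\in D\}$ has size $\lambda$ — call this set $T$, so $T\subseteq\lambda$, $|T|=\lambda$ — and for every $\beta$ there is $g(\beta)\in\lambda'$ with $[\{g(\beta)\}]\notin f_\beta(D)$, i.e. $f_\beta^{-1}\overline{[\{g(\beta)\}]}\in D$. As in Lemma \ref{lem}, this is equivalent to the finite intersection property of the family $\{f_\beta^{-1}\overline{[\{g(\beta)\}]}\mid\beta\in\kappa\}\cup\{[\{\alpha\}]\mid\alpha\in T\}$ — but now there is a subtlety: starting from a set $T$ of size $\lambda$ and a function $g$ with that FIP, one must build an ultrafilter $D$ that \emph{exactly} almost covers $\lambda$, and it is automatic that $\{\alpha\mid[\{\alpha\}]\in D\}\supseteq T$ has size $\lambda$, so $D$ does almost cover $\lambda$; conversely any $D$ almost covering $\lambda$ yields such a $T$ and $g$. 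Thus the negation of (a) is: there exist $T\subseteq\lambda$ with $|T|=\lambda$ and $g:\kappa\to\lambda'$ such that for all finite $F\subseteq\kappa$, $G\subseteq T$ we have $[G]\cap\bigcap_{\beta\in F}f_\beta^{-1}\overline{[\{g(\beta)\}]}\neq\emptyset$ (using $\bigcap_{\alpha\in G}[\{\alpha\}]=[G]$), whose negation is exactly (b). This gives (a)$\Leftrightarrow$(b) of Theorem \ref{almlmklmeq}.

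With that in hand, (b)$\Leftrightarrow$(c)$\Leftrightarrow$(c$'$) go through verbatim as in the proof of Theorem \ref{lmklmeq}: set $C_{\alpha,\beta}=f_\beta^{-1}[\{\alpha\}]$ in one direction and $f_\beta(x)=\{\alpha\in\lambda'\mid x\in C_{\alpha,\beta}\}$ in the other, noting condition (c)(i) is precisely what forces $f_\beta(x)\in S_{\mu'}(\lambda')$, and (c$'$) is the complement of (c) inside $S_\mu(\lambda)$; the extra ``for every $T$'' clause in (c)(ii)/(c$'$)(ii) matches the extra clause in (b) mechanically. For the equivalence with (d) under $\kappa\geq\sup\{\lambda,\lambda'\}$ I would copy the two implications from the proof of Theorem \ref{lmklmeq}, making the following adjustments. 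In (d)$\Rightarrow$(a): given an ultrafilter $D$ almost covering $\lambda$, one needs an element of the ultrapower witnessing almost $(\lambda,\mu)$-regularity; here $Id_D$ works again, since $d(\{\alpha\})\subseteq Id_D$ holds for every $\alpha$ with $[\{\alpha\}]\in D$, and there are $\lambda$ such $\alpha$, so $\mathfrak{B}$ (the Skolem hull) is almost $(\lambda,\mu)$-regular; then (d) gives ordinary $(\lambda',\mu')$-regularity of $\mathfrak B$ and the rest — pulling back the witness along a definable $f'$, correcting it so its values land in $U'$, and checking $f'(D)$ covers $\lambda'$ — is identical. In (a)$\Rightarrow$(d): add function symbols for the $f_\beta$; given $\mathfrak B\equiv\mathfrak S^+$ almost $(\lambda,\mu)$-regular with witness $x$, form $F=\{E_\phi\mid\mathfrak B\models\phi(x)\}$ and extend to an ultrafilter $D$; the formulas $\phi(z)\equiv\{\alpha\}\subseteq z$ now give $[\{\alpha\}]\in D$ only for the $\lambda$-many $\alpha$ with $\mathfrak B\models\{\alpha\}\subseteq x$, so $D$ almost covers $\lambda$, and then (a) plus the same $\neg\psi\notin D\Rightarrow\psi\in F$ argument shows $\mathfrak B\models\{\alpha'\}\subseteq f_\beta(x)$ for all $\alpha'\in\lambda'$, i.e. $\mathfrak B$ is $(\lambda',\mu')$-regular. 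I expect the main obstacle to be purely bookkeeping: making sure in the new Lemma that ``almost covers $\lambda$'' is correctly captured by quantifying over all $T\in[\lambda]^\lambda$ rather than a fixed one — in particular that the FIP family built from one witnessing $T$ yields an ultrafilter whose cover-set is genuinely of size $\lambda$ (it contains $T$, so this is free) — and, on the model-theoretic side, verifying that the Skolem-hull element $Id_D$ inherits exactly the right number of singletons below it. As in the earlier theorems, the equivalences (f), (g) with the logic-theoretic compactness statements are deferred (``shall be proved elsewhere'').
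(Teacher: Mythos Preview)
Your proposal is correct and follows exactly the approach the paper intends: the paper's own proof reads in full ``Similar to the proof of Theorem \ref{lmklmeq}'', and what you have written is precisely the adaptation of Lemma \ref{lem} and the proof of Theorem \ref{lmklmeq} with ``covers $\lambda$'' replaced by ``almost covers $\lambda$'' via the quantification over $T\in[\lambda]^{\lambda}$. One cosmetic remark: in your (a)$\Rightarrow$(d) sketch, the word ``only'' is slightly misleading, since the ultrafilter $D$ extending $F$ may pick up $[\{\alpha\}]$ for additional $\alpha$'s beyond those with $\mathfrak B\models\{\alpha\}\subseteq x$; but this is harmless, as you only need $|\{\alpha\mid[\{\alpha\}]\in D\}|\geq\lambda$, which is guaranteed by the inclusion.
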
 

\begin{thm}\label{almlmkalmlmeq} 
Suppose that  $ \lambda \geq \mu$, $ \lambda' \geq \mu'$
 are infinite cardinals, and $ \kappa $ is any cardinal. 

Then the following conditions are equivalent.

\smallskip

(a) $ \alm (\lambda, \mu) \stackrel{\kappa}{\Rightarrow} \alm (\lambda', \mu')$ holds.

\smallskip

(b) There are $ \kappa $ functions 
$ (f_ \beta ) _{ \beta \in \kappa }: S_\mu(\lambda ) \to S_{\mu'}(\lambda ')$
such that 
for every $T \subseteq \lambda $ with $|T|= \lambda $ and
for every function $g: \kappa \to S _{ \lambda' } (\lambda') $ there exist a finite
set  $G \subseteq T$, a finite set $F \subseteq \kappa $, and, for
$ \beta \in F$, 
finite sets $H_ \beta \subseteq \lambda'\setminus g( \beta ) $ 
  such that 
$ [G] \subseteq
 \bigcup_{\beta\in F, \beta ^* \in H_ \beta }  f_\beta ^{-1} [\{g(\beta^*)\}]$.

\smallskip

(c) There is a family $ (C_{ \alpha  , \beta }) _{ \alpha\in\lambda'  , \beta \in \kappa}  $ 
of subsets of $ S_ \mu(\lambda)$ such that:

(i) For every $ \beta\in\kappa$ and  every  $H \subseteq \lambda' $, if
 $|H|\geq \mu'$ then
 $\bigcap_{\alpha  \in H} C_{ \alpha  , \beta  } = \emptyset  $.

 (ii) For every $T \subseteq \lambda $ with $|T|= \lambda $ and
for every function $g: \kappa \to S _{ \lambda' } (\lambda') $ there exist a finite
set  $G \subseteq T $, a finite set $F \subseteq \kappa $, and, for
$ \beta \in F$, 
finite sets $H_ \beta \subseteq \lambda'\setminus  g( \beta ) $ 
  such that 
$ [G] \subseteq  \bigcup_{\beta\in F,\beta ^* \in H_ \beta } C_{ g(\beta^*) , \beta }$.

 \smallskip

(c$'$) 
There is a family $ (B_{ \alpha  , \beta }) _{ \alpha\in\lambda'  , \beta \in \kappa}  $ 
of subsets of $ S_ \mu(\lambda)$ such that:

(i) For every $ \beta\in\kappa$ and  every  $H \subseteq \lambda' $, if
 $|H|\geq \mu'$ then
 $\bigcup_{\alpha  \in H} B_{ \alpha  , \beta  } = S_ \mu(\lambda) $.

 (ii) For every $T \subseteq \lambda $ with $|T|= \lambda $ and
for every function $g: \kappa \to S _{ \lambda' } (\lambda') $ there exist a finite
set  $G \subseteq T $, a finite set $F \subseteq \kappa $, and, for
$ \beta \in F$, 
finite sets $H_ \beta \subseteq \lambda'\setminus g( \beta ) $ 
  such that 
$ [G] \cap \bigcap_{\beta\in F, \beta ^* \in H_ \beta } B_{ g(\beta^*) , \beta  } 
= \emptyset $.

\smallskip

If in addition
$\kappa \geq \sup \{  \lambda , \lambda' \} $,
then the preceding conditions are also equivalent to:

\smallskip

(d) ${\mathfrak S}(\lambda, \mu; \lambda',\mu') $ has 
an expansion
(equivalently, a multi-sorted expansion)
${\mathfrak S}^+ $ 
with at most $\kappa $ new symbols such that
whenever 
$ {\mathfrak B} \equiv {\mathfrak S}^+ $ and ${\mathfrak B}$ is almost 
$(\lambda, \mu)$-regular, then 
${\mathfrak B}$ is almost $(\lambda', \mu')$-regular.
\end{thm}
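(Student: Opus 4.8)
The plan is to mirror, step by step, the proof of Theorem \ref{lmklmeq}, inserting the ``almost'' modifications wherever the covering hypothesis or the covering conclusion is weakened, exactly as was done separately for Theorems \ref{lmkalmlmeq} and \ref{almlmklmeq}. The present statement is the common generalization of those two, so the work is bookkeeping rather than new ideas.

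\medskip

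First I would establish the combinatorial core, namely (a) $\Leftrightarrow$ (b), by an ``almost'' analogue of Lemma \ref{lem}. Negating (a) means there is an ultrafilter $D$ that almost covers $\lambda$ (so the set $T=\{\alpha\in\lambda\mid [\alpha]\in D\}$ has size $\lambda$) while, for each $\beta\in\kappa$, $f_\beta(D)$ fails to \emph{almost} cover $\lambda'$; the latter says that $\{\alpha'\in\lambda'\mid [\alpha']\in f_\beta(D)\}$ has size $<\lambda'$, equivalently there is a set $g(\beta)\in S_{\lambda'}(\lambda')$ such that for all $\alpha'\in\lambda'\setminus g(\beta)$ we have $[\alpha']\notin f_\beta(D)$, i.e. $f_\beta^{-1}\overline{[\{\alpha'\}]}\in D$. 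Running the finite-intersection-property argument of Lemma \ref{lem}, the existence of such a $D$ is equivalent to: there is a function $T$-choosing set and a $g:\kappa\to S_{\lambda'}(\lambda')$ so that the family $\{f_\beta^{-1}\overline{[\{\alpha'\}]}\mid \beta\in\kappa,\ \alpha'\in\lambda'\setminus g(\beta)\}\cup\{[\{\alpha\}]\mid \alpha\in T\}$ has the FIP. Crucially one must check that from \emph{any} ultrafilter $D$ with the FIP on this family, the set $\{\alpha\in\lambda\mid[\alpha]\in D\}$ really has size $\lambda$ — it contains $T$, so this is immediate — and that the ``$<\lambda'$-many'' failure is correctly captured by the quantifier ``$\exists g(\beta)\in S_{\lambda'}(\lambda')\ \forall\alpha'\notin g(\beta)$''. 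Taking the contrapositive and using $\bigcap_{\alpha\in G}[\{\alpha\}]=[G]$ yields precisely (b), with $G$ ranging over finite subsets of $T$ and the $H_\beta$ being the finite subsets of $\lambda'\setminus g(\beta)$ appearing in the finite subfamily.

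\medskip

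Next, (b) $\Leftrightarrow$ (c) $\Leftrightarrow$ (c$'$) are verbatim the translations from Theorem \ref{lmklmeq}: set $C_{\alpha,\beta}=f_\beta^{-1}[\{\alpha\}]$ for one direction, and $f_\beta(x)=\{\alpha\in\lambda'\mid x\in C_{\alpha,\beta}\}$ for the other, observing that condition (c)(i) ($\bigcap_{\alpha\in H}C_{\alpha,\beta}=\emptyset$ whenever $|H|\ge\mu'$) is exactly what forces $f_\beta(x)\in S_{\mu'}(\lambda')$; and (c)$\Leftrightarrow$(c$'$) is complementation in $S_\mu(\lambda)$. None of (i), (ii) interact with the $T$-restriction beyond carrying it along, so these equivalences need no real change.

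\medskip

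Finally, for the model-theoretic equivalence with (d) under $\kappa\ge\sup\{\lambda,\lambda'\}$, I would reuse the two arguments of Theorem \ref{lmklmeq} with the appropriate one-sided change. For (d) $\Rightarrow$ (a): given $D$ almost covering $\lambda$, form $\mathfrak C=\prod_D\mathfrak S^+$ and the Skolem hull $\mathfrak B$ of $Id_D$; since $D$ almost covers $\lambda$, $\mathfrak B\models\{\alpha\}\subseteq Id_D$ for $\lambda$-many $\alpha$, so $\mathfrak B$ is \emph{almost} $(\lambda,\mu)$-regular, hence by (d) almost $(\lambda',\mu')$-regular, and then pushing the witness $x'_D=f'(Id_D)$ through a definable $f':U\to U'$ as before shows $f'(D)$ almost covers $\lambda'$. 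For (a) $\Rightarrow$ (d): given $\mathfrak B\equiv\mathfrak S^+$ almost $(\lambda,\mu)$-regular via $x$ with $T=\{\alpha\mid\mathfrak B\models\{\alpha\}\subseteq x\}$ of size $\lambda$, build the filter $F=\{E_\phi\mid\mathfrak B\models\phi(x)\}$, extend to an ultrafilter $D$; then $[\{\alpha\}]\in D$ for all $\alpha\in T$, so $D$ almost covers $\lambda$; apply (a) to get $\beta$ with $f_\beta(D)$ almost covering $\lambda'$, and the same $E_\psi\in F$ argument for $\psi(z)\equiv\{\alpha'\}\subseteq f_\beta(z)$ transfers ``$\lambda'$-many $\alpha'$'' to $\mathfrak B$, so $f_\beta(x)$ witnesses almost $(\lambda',\mu')$-regularity of $\mathfrak B$. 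The single-sorted-versus-multi-sorted equivalence is handled exactly as in the remark about adding Skolem functions, since $\kappa\ge\sup\{\lambda,\lambda'\}$.

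\medskip

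The main obstacle is purely notational: keeping straight the two independent ``almost'' relaxations — the restriction ``$G\subseteq T$ for arbitrary $T$ of size $\lambda$'' coming from the hypothesis side, and the nested finite sets ``$H_\beta\subseteq\lambda'\setminus g(\beta)$ with $g(\beta)\in S_{\lambda'}(\lambda')$'' coming from the conclusion side — and verifying that in the Lemma-\ref{lem}-style argument the FIP family genuinely encodes ``$f_\beta(D)$ does not almost cover $\lambda'$'' rather than the weaker ``does not cover $\lambda'$''. Once that encoding is pinned down, every remaining step is a transcription of the corresponding step in Theorem \ref{lmklmeq}, with ``covers/regular'' replaced by ``almost covers/almost regular'' on whichever side is indicated; accordingly I would write only ``The proof combines the modifications of Theorems \ref{lmkalmlmeq} and \ref{almlmklmeq} relative to Theorem \ref{lmklmeq}'' and record the one or two places where both relaxations must be applied simultaneously.
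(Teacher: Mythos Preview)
Your proposal is correct and takes essentially the same approach as the paper: the paper's proof of this theorem (together with Theorems \ref{lmkalmlmeq} and \ref{almlmklmeq}) consists of the single line ``Similar to the proof of Theorem \ref{lmklmeq}.'' Your outline carries out exactly that adaptation, combining the two ``almost'' modifications on the hypothesis side and the conclusion side, and in fact supplies considerably more detail than the paper does.
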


\begin{proof}[Proofs] Similar to the proof of 
Theorem \ref{lmklmeq}.
\end{proof} 

\begin{remark}\label{common}
There is a common generalization of Theorems \ref{multilmklmeq}
and \ref{lmkalmlmeq}. 
There is a simultaneous generalization of Theorems 
\ref{almlmklmeq} and \ref{almlmkalmlmeq} along the lines of Theorem 
\ref{multilmklmeq}.
We leave details to the reader.

In fact, we have a common generalization of all the results 
presented in this note, including Remark \ref{rmkult} below.
Details shall be presented elsewhere.
 \end{remark}

\section{Two problems and a further generalization}\label{prob}

\begin{problem}\label{prob1}
 It is proved in \cite[Theorem 2]{pams}
that 
$ \alm (\lambda^+, \mu^+) \stackrel{\lambda^+}{\Rightarrow} \alm (\lambda, \mu)$ holds.

Is it true that 
$(\lambda^+, \mu^+) \stackrel{\lambda^+}{\Rightarrow} (\lambda, \mu)$ holds?

This is true when $\lambda=\mu$ is a regular cardinal.
\end{problem}

\begin{problem}\label{prob2}
 As proved in \cite{fms}, it is consistent, modulo some large cardinal assumption,
that there is a uniform ultrafilter over $\omega_1$ which is not
$(\omega,\omega_1 )$-regular. This is equivalent to the failure of
$(\omega_1,\omega_1) \stackrel{2^{\omega_1}}{\Rightarrow} (\omega_1, \omega )$.

Is it possible to find a model for the failure of
$(\omega_1,\omega_1) \stackrel{\omega_1}{\Rightarrow} (\omega_1, \omega )$
by using weaker consistency assumptions?

Which is the exact consistency strength of the failure of
$(\omega_1,\omega_1) \stackrel{\omega_1}{\Rightarrow} (\omega_1, \omega )$?

More generally, for arbitrary $\lambda $, which is the exact consistency strength of the failure of 
$(\lambda^+, \lambda ^+) \stackrel{\lambda^+}{\Rightarrow} (\lambda^+, \lambda )$?
 \end{problem}

\begin{remark}\label{rmkult}
We can extend the definitions and the results of the present note as follows.
If $ \lambda$ and $\mu$ are \emph{ordinals}, let 
$S_\mu(\lambda )$ denote the set of all subsets of 
$ \lambda$ having order type $<\mu$. See also \cite{BK}.

Definitions 
\ref{lmklm},
\ref{deffromarx},
\ref{multilmklm} 
and \ref{almdef} 
can be easily generalized to the case when
$ \lambda$ and $\mu$ are ordinals.
In the present situation, the most appropriate definition of ``almost covering'' appears to be the following:
an ultrafilter $D$ over $S _{\mu}(\lambda )$
almost covers $\lambda $ if and only if 
order type of $ \{ \alpha \in \lambda| [\alpha ] \in D \}= \lambda  $.

The definition of 
$\kappa$-$(\lambda,\mu)$-compactness for logics, too,
can be easily generalized 
when
$ \lambda$ and $\mu$ are ordinals,
by always taking into account order type, instead
of cardinality.

Notice that, in all the definitions and the results here, $\kappa$
is used just as an index set; the cardinal structure on $\kappa$
is not used at all. Hence, allowing $\kappa$
to be an ordinal is no gain in generality.

All the results of the present note, when appropriately formulated, extend to
the more general setting
when
$ \lambda, \mu, \lambda', \mu', \lambda_\beta, \mu_\beta$ are ordinals.

Everywhere, cardinality assumptions must be replaced by assumptions about order type.
For example, the condition 
$|H|\geq \mu'$ in Theorem \ref{lmklmeq} (c), (c$'$)
has to be replaced by ``the order type of $H$ is $\geq \mu'$''.

The condition
$\kappa \geq \sup \{  \lambda , \lambda' \} $
before clause (d) in Theorem \ref{lmklmeq}
can be replaced by
$\kappa \geq \sup \{|  \lambda |,| \lambda'| \} $.

The same applies to the condition before
clause (f), and we actually need the requirement
that $\lambda'=\mu'$ is a regular \emph{cardinal}.

Similar remarks apply to  
Theorems   
 \ref{multilmklmeq} and
\ref{almlmklmeq}.

Theorems
\ref{lmkalmlmeq}
and \ref{almlmkalmlmeq}
hold, too, with slight further modifications.
\end{remark}

\end{document}